\documentclass[11pt]{amsart}
\usepackage[T1]{fontenc}
\usepackage[utf8]{inputenc}
\usepackage{lmodern}
\usepackage{amsmath,amssymb,amsthm}
\usepackage[margin=1.15in]{geometry}
\usepackage{microtype}
\usepackage[colorlinks=true,linkcolor=blue,citecolor=blue,urlcolor=blue]{hyperref}

\newtheorem{theorem}{Theorem}[section]

\newtheorem{lemma}[theorem]{Lemma}

\newtheorem{conjecture}[theorem]{Conjecture}
\theoremstyle{remark}
\newtheorem{remark}[theorem]{Remark}

\DeclareMathOperator{\GL}{GL}

\DeclareMathOperator{\Gal}{Gal}

\newcommand{\Mat}{\mathrm M}

\title[Commuting graphs of matrix rings of even size]{Finite matrix rings of even size are determined by their commuting graphs}
\author{Hamid Shahverdi}
\author{Vahid Shahverdi}

\date{}
\begin{document}
\begin{abstract}
The commuting graph of a noncommutative ring has its noncentral elements as vertices, with two distinct vertices adjacent when they commute. Akbari, Ghandehari, Hadian and Mohammadian conjectured that a full matrix ring over a finite field is determined, among rings with identity, by its commuting graph. In this paper, we prove the conjecture for every even matrix size.
\end{abstract}
\maketitle

\section{Introduction}

Let $R$ be a noncommutative ring with center $Z(R)$. Its \emph{commuting graph}, denoted by $\Gamma(R)$, is the simple graph with vertex set $R\setminus Z(R)$ in which two distinct vertices $x,y$ are connected if $xy=yx$. Although the commuting graph only preserves the commutativity relation, it is interesting to know which rings it determines up to isomorphism.

For rings of matrices over finite fields, Akbari, Ghandehari, Hadian and Mohammadian \cite{AGHM} proposed the following conjecture in 2004.

\begin{conjecture}%[Akbari--Ghandehari--Hadian--Mohammadian]
\label{conj}
Let $F$ be a finite field and let $m\geq 2$. If $R$ is a ring with identity such that
\begin{equation}\label{eq:graph_iso}
\Gamma(R)\cong\Gamma(\Mat_m(F)),
\end{equation}
then $R\cong\Mat_m(F)$.
\end{conjecture}

Abdollahi \cite{A} showed that \eqref{eq:graph_iso} determines the order of the ring, i.e., $|R|=|\Mat_m(F)|$. Subsequently, the conjecture was established for $m=2$ by Mohammadian \cite{M} and later for $m=2^a3^b$, with $a\geq 1$ and $b\geq 0$, by Dorbidi \cite{D}. For $m=3$, Dorbidi and Manaviyat \cite{DM} proved the conjecture with extra assumption that $Z(R)$ is a field. In this paper, as the title suggests, we resolve the conjecture for every even matrix size.

\begin{theorem}\label{thm:main}
Let $F$ be a finite field and let $n\geq1$. If $R$ is a ring with identity such that
\begin{equation}
\Gamma(R)\cong\Gamma(\Mat_{2n}(F)),
\end{equation}
then $R\cong\Mat_{2n}(F)$.
\end{theorem}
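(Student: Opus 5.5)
We follow the strategy used in the known cases. By Abdollahi's theorem, $R$ is finite with $|R|=|F|^{4n^2}$, and $|Z(R)| = |R| - |V(\Gamma(R))| = |F|$. Write $q=|F|$. Since $R$ is finite, the Wedderburn--Artin theorem applied to $R/J(R)$, together with the fact that the commuting graph determines $|Z(R)|$ and the sizes of centralizers (a centralizer $C_R(x)$ of a noncentral $x$ has $|C_R(x)|=\deg(x)+1+|Z(R)|$), lets us read off ring-theoretic invariants of $R$ from graph data of $\Gamma(\Mat_{2n}(F))$.

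\textbf{Step 1: structure of $R$.} We show that $R$ is semisimple and then simple. The plan is to compare maximal cliques and vertex degrees. In $\Mat_{2n}(F)$ the minimum degree is attained at matrices whose centralizer is as small as possible, namely companion-type elements of a degree-$2n$ irreducible polynomial. These have centralizer a field $\mathbb F_{q^{2n}}$, of size $q^{2n}$, and these vertices lie in cliques $C(x)\setminus Z$ isomorphic to complete graphs on $q^{2n}-q$ vertices. Moreover, such cliques are pairwise disjoint components after deleting vertices of larger degree. Counting these objects gives the number of maximal subfields of size $q^{2n}$ in $R$.

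\textbf{Step 2: reduce to a decomposition of $R$.} Write $R=\bigoplus R_i$ with local or indecomposable factors, and use that $Z(R)$ has size $q$ and that the centralizer of an element of minimum degree has size $q^{2n}$. If $J(R)\ne0$ or $R$ splits as a product, we will try to exhibit a noncentral element whose centralizer is too large or too small compared with the degree spectrum of $\Gamma(\Mat_{2n}(F))$. The key claim is that in $\Gamma(\Mat_{2n}(F))$ the largest proper centralizers have size $q^{(2n-1)^2+1}\cdot$(correction). Concretely, the maximal-degree vertices are the matrices $\lambda I+E$ with $E$ of rank one, and a decomposition of $R$ would produce elements (idempotents of one factor) with centralizer size violating this. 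This forces $R/J(R)\cong \Mat_k(\mathbb F_{q'})$ with $J(R)=0$. Then $|Z(R)|=q'=q$ and $q^{k^2}=q^{4n^2}$ give $k=2n$, so $R\cong\Mat_{2n}(F)$.

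\textbf{Main obstacle.} The difficult part is ruling out the case where $R$ is local or has nonzero radical but mimics the degree data. For odd $m$ this is exactly where earlier work needed the assumption that $Z(R)$ is a field. The evenness of $2n$ should be used here. We would exploit elements of $\Mat_{2n}(F)$ whose centralizer is $\Mat_n(\mathbb F_{q^2})$ (and, dually, $\Mat_2(\mathbb F_{q^n})$), which correspond to vertices of specific degree $q^{2n^2}-q-1$. We then show that in $R$ the centralizer $C_R(x)$ of such a vertex is a ring whose commuting graph is isomorphic to that of $\Mat_n(\mathbb F_{q^2})$ or $\Mat_2(\mathbb F_{q^n})$, and whose center is determined. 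Induction on $n$ (base case $m=2$ by Mohammadian) then shows $C_R(x)\cong\Mat_2(\mathbb F_{q^n})$, which is simple. Finally, we transfer simplicity of this large subring back to $R$: the radical $J(R)$ commutes badly with a subring containing a copy of $\mathbb F_{q^{2n}}$ acting irreducibly. This forces $J(R)=0$, and hence $Z(R)$ is a field. This double-centralizer step, identifying the graph of $C_R(x)$ as an induced subgraph with the correct center, is where I expect most of the work.
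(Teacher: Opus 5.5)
\textbf{There is a genuine gap.} Your opening agrees with the paper. Abdollahi gives $|R|=q^{4n^2}$ and $|Z(R)|=q$. Restricting the graph isomorphism to a closed neighbourhood, after deleting the vertices adjacent to all others, gives $\Gamma(C_R(x))\cong\Gamma(\Mat_2(\mathbb F_{q^n}))$, and Mohammadian then gives $C_R(x)\cong\Mat_2(\mathbb F_{q^n})$. This part is routine rather than where the work lies. It already shows that $Z(R)\subseteq Z(C_R(x))$ is a field. No induction on $n$ is needed; induction through $\Mat_n(\mathbb F_{q^2})$ would in any case be unavailable for odd $n$.

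The gap is the step you wave through: proving $J(R)=0$ and that $R/J(R)$ is simple. ``The radical commutes badly with a subring containing $\mathbb F_{q^{2n}}$ acting irreducibly'' is not an argument, and Steps 1--2 do not supply one:
\begin{itemize}
\item the largest centralizers have size exactly $q^{(2n-1)^2+1}$;
\item the identities of the factors in a product decomposition are central, so they give no offending vertex;
\item the minimum degree is attained by every cyclic matrix, not only by companions of irreducible polynomials.
\end{itemize}

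No argument using only the data you invoke can work. That data is $|R|$, $|Z(R)|$, and the existence of one centralizer isomorphic to $\Mat_2(\mathbb F_{q^n})$. For $n$ even, let $K=\mathbb F_{q^2}$, $S=\Mat_n(K)$, $\sigma$ the entrywise Frobenius, and $R=S\oplus S$ with $(a,b)(c,d)=(ac,\,ad+b\sigma(c))$. Then:
\begin{itemize}
\item $|R|=q^{4n^2}$ and $Z(R)=F\cdot 1$;
\item $J(R)=0\oplus S\neq 0$;
\item for $x\in S$ generating a subfield $\mathbb F_{q^n}\supseteq K$, we get $C_R((x,0))=C_S(x)\cong\Mat_2(\mathbb F_{q^n})$. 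This holds because $x$ and $\sigma(x)$ have disjoint spectra, so $xb=b\sigma(x)$ forces $b=0$.
\end{itemize}

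What is missing is global information from the graph. The paper uses two ingredients.
\begin{enumerate}
\item For each such centralizer $C$ one has $C\cap J=0$. Hence $1+J$ acts freely by conjugation on the set $\mathfrak C$ of these centralizers. The size $|\mathfrak C|$ is read off from $\Gamma(\Mat_{2n}(F))$: there is one $\GL_{2n}(F)$-orbit by Skolem--Noether, with stabiliser $\GL_2(E)\rtimes\Gal(E/F)$. A $p$-adic valuation count then gives $\dim_F J\le 2n(n-1)$.
\item The map $j\mapsto jx-xj$ is bijective on $J$. This shows $C$ maps onto $C_{R/J}(\bar x)\cong\Mat_2(E)$, forcing $R/J$ to be simple. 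The double centralizer theorem then gives $\dim_F J=4n^2(1-1/d)$ with $d=[Z(R/J):F]$.
\end{enumerate}
Combining the two yields $d=1$ and $J=0$. The example above is exactly the excluded borderline case $d=2$, $\dim_F J=2n^2$.
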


Our proof idea uses centralizers isomorphic to $\Mat_2(E)$, where $[E:F]=n$. The Skolem--Noether theorem, together with the conjugation action of $1+J(R)$ on these centralizers, gives us a bound on the dimension of the Jacobson radical $J(R)$. We then apply the double centralizer theorem in $R/J(R)$, which shows that this bound forces $J(R)=0$ and obtain the desired isomorphism.

\section{Preliminaries}

 For $X\subseteq R$, we denote its centralizer by $C_R(X)$ and consider $C_R(x)$ when $X=\{x\}$.

Our first lemma is a basic observation and establishes the relations between centralizers coming from a graph isomorphism.

\begin{lemma}\label{lem:centralizers}
Let $\phi:\Gamma(T)\to\Gamma(U)$ be a graph isomorphism. For every $x\in T\setminus Z(T)$, the restriction of $\phi$ gives rise to an isomorphism $\Gamma(C_T(x))\cong\Gamma(C_U(\phi(x)))$. Moreover, the correspondence $C_T(x)\mapsto C_U(\phi(x))$ is a bijection between the distinct centralizers of noncentral elements in $T$ and $U$.
\end{lemma}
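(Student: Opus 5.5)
The plan is to express everything in purely graph-theoretic terms inside $\Gamma(T)$, so that $\phi$ automatically transports it to $\Gamma(U)$. For a vertex $x$ of $\Gamma(T)$, write $N[x]$ for its closed neighbourhood, i.e.\ $x$ together with all vertices adjacent to $x$.

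\emph{Step 1 (centralizers as closed neighbourhoods).} By definition of $\Gamma(T)$ we have
\[
N[x]=C_T(x)\setminus Z(T), \qquad\text{so}\qquad C_T(x)=N[x]\sqcup Z(T).
\]
Since $\phi$ is a graph isomorphism, $\phi(N[x])=N[\phi(x)]=C_U(\phi(x))\setminus Z(U)$.

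\emph{Step 2 (the centre of $C_T(x)$).} Since $Z(T)\subseteq Z(C_T(x))$, we can write
\[
C_T(x)\setminus Z(C_T(x)) \;=\; N[x]\setminus\bigl(Z(C_T(x))\setminus Z(T)\bigr).
\]
An element $y\in N[x]$ lies in $Z(C_T(x))$ exactly when it commutes with every element of $N[x]$. The elements of $Z(T)$ impose no condition, since $y$ commutes with them automatically. So $y\in N[x]$ is central in $C_T(x)$ iff $N[x]\subseteq N[y]$, which is a condition on the graph alone. Define
\[
V_x=\{\,y\in N[x] : N[x]\not\subseteq N[y]\,\}.
\]
Then $V_x$ is precisely the vertex set of $\Gamma(C_T(x))$. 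Because $\phi$ preserves closed neighbourhoods and inclusions between them, $\phi(V_x)=V_{\phi(x)}$, where $V_{\phi(x)}$ is defined the same way in $\Gamma(U)$. In both graphs, adjacency between vertices of $V_x$ is commutation in $T$, which is the same as commutation in $C_T(x)$. Hence $\phi|_{V_x}$ is an isomorphism $\Gamma(C_T(x))\cong\Gamma(C_U(\phi(x)))$. If $C_T(x)$ happens to be commutative, then $V_x=\emptyset$, so $V_{\phi(x)}=\emptyset$ and $C_U(\phi(x))$ is commutative too. In that case the statement holds with the convention that the empty graph corresponds to a commutative ring.

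\emph{Step 3 (bijection of centralizers).} By Step 1, for noncentral $x,y$ we have $C_T(x)=C_T(y)$ iff $N[x]=N[y]$. This holds iff $N[\phi(x)]=N[\phi(y)]$, which again by Step 1 holds iff $C_U(\phi(x))=C_U(\phi(y))$. The forward implication shows the map $C_T(x)\mapsto C_U(\phi(x))$ is well defined, and the reverse implication shows it is injective. It is surjective because $\phi$ is onto the noncentral elements of $U$.

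The only delicate point is Step 2: the vertex set of $\Gamma(C_T(x))$ removes the centre of $C_T(x)$, which can be strictly larger than $Z(T)$. The characterization $N[x]\subseteq N[y]$ is exactly what makes that extra central part visible in the graph.
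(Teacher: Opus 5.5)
Your proposal is correct and follows essentially the same route as the paper. You identify $C_T(x)\setminus Z(T)$ with the closed neighbourhood $N[x]$, and you single out $Z(C_T(x))\setminus Z(T)$ graph-theoretically as the vertices of $N[x]$ adjacent to all the others (your condition $N[x]\subseteq N[y]$), which you delete. You then obtain the centralizer bijection from the bijection of closed neighbourhoods, exactly as the paper does; your explicit treatment of the case where $C_T(x)$ is commutative is a small point the paper leaves implicit.
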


\begin{proof}
The closed neighborhood of $x$ is $C_T(x)\setminus Z(T)$ and among its vertices, those adjacent to all the others are exactly the elements of $Z(C_T(x))\setminus Z(T)$. We delete these vertices to get $\Gamma(C_T(x))$. The same argument applies to $\phi(x)$, and $\phi$ takes the deleted vertices in one neighborhood to the deleted vertices in the other. Its restriction therefore gives the desired isomorphism in our lemma.

For the second part, we recover $C_T(x)$ from the closed neighborhood of $x$ by adding $Z(T)$. Thus, distinct centralizers correspond to distinct closed neighborhoods, and the same holds true for $U$. The bijection between closed neighborhoods given by $\phi$ therefore gives the desired bijection between centralizers.
\end{proof}

The next lemma is the key to our main theorem. There, we use the Skolem--Noether theorem and the double centralizer theorem \cite{R} for the proof of the main theorem, which we recall them briefly here.  

Let $B$ be a finite-dimensional central simple $F$-algebra. Skolem--Noether says that every $F$-isomorphism $\varphi$ between simple unital $F$-subalgebras of $B$ has the form $\varphi(x)=uxu^{-1}$ for some invertible element $u\in B$. We use this theorem for ring of matrices. In particular, two simple unital
$F$-subalgebras of $\Mat_m(F)$ are conjugate by an element of $\GL_m(F)$ whenever they are $F$-isomorphic.

The double centralizer theorem states that if $B$ is
a finite-dimensional central simple unital $F$-algebra and
$A\subseteq B$ is a simple unital $F$-subalgebra, then
\begin{equation}\label{eq:double}
    \dim_F A\,\dim_F C_B(A)=\dim_F B.
\end{equation}

\begin{lemma}\label{lem:matrix}
Let $F$ be a finite field, $E/F$ be a field extension of degree $n>1$, and $B=\Mat_{2n}(F)$. If $P \in M_n(F)$ is a companion matrix of an irreducible polynomial of degree $n$, then $C_B(Q)\cong\Mat_2(E)$, where $Q = \operatorname{diag}(P,P)$. Moreover, all centralizers in $B$ isomorphic to $\Mat_2(E)$ are conjugate under $\GL_{2n}(F)$, and their normalizers are isomorphic to $\GL_2(E)\rtimes\Gal(E/F)$.
\end{lemma}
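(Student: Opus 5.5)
We identify $F[P]$ with $E$ and view $F^{2n}$ as a $2$-dimensional $E$-vector space, then use this to compute $C_B(Q)$, classify the centralizers isomorphic to $\Mat_2(E)$, and compute their normalizers.

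\emph{Step 1: the centralizer.} Since $P$ is the companion matrix of an irreducible polynomial $f$ of degree $n$, the algebra $F[P]\subseteq \Mat_n(F)$ is a field isomorphic to $E=F[t]/(f)$. The matrix $Q=\operatorname{diag}(P,P)$ satisfies $f(Q)=0$, so $F[Q]\cong E$ as well. Make $V=F^{2n}$ into an $E$-vector space by letting $E$ act through $F[Q]$; then $\dim_E V=2$. A matrix commutes with $Q$ if and only if it commutes with all of $F[Q]$, that is, if and only if it is $E$-linear. Hence $C_B(Q)=\operatorname{End}_E(V)\cong\Mat_2(E)$. As a check, the double centralizer theorem \eqref{eq:double} gives $\dim_F C_B(F[Q])=4n^2/n=4n=\dim_F\Mat_2(E)$.

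\emph{Step 2: every such centralizer is conjugate to $C_B(Q)$.} Let $C=C_B(x)$ be a centralizer with $C\cong\Mat_2(E)$ as rings. The scalar matrices $F\cdot 1$ lie in $C$, so $F\subseteq Z(C)\cong E$. The subring $F$ of the field $E$ is a finite field of order $|F|$, hence it is the unique subfield of that order, and the ring isomorphism $C\cong\Mat_2(E)$ is $F$-semilinear. Composing with an automorphism of $E$ extending the induced automorphism of $F$ (possible because $\Gal(E/F_p)$ surjects onto $\Gal(F/F_p)$), we may assume the isomorphism is $F$-linear. Then $C$ is a simple $F$-subalgebra containing $1$, so $Z(C)=L$ is a field with $[L:F]=n$. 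The double centralizer theorem for the simple subalgebra $C$ gives $\dim_F C_B(C)=4n^2/4n=n$. Since $L\subseteq C_B(C)$ and $L$ has dimension $n$, we get $C_B(C)=L$, and applying double centralizer again gives $C_B(L)=C$. Any two subfields of degree $n$ over $F$ are $F$-isomorphic, since both are isomorphic to $E$. By Skolem--Noether there is $u\in\GL_{2n}(F)$ with $uLu^{-1}=F[Q]$, and therefore $uCu^{-1}=C_B(F[Q])=C_B(Q)$.

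\emph{Step 3: the normalizer.} Take $C=C_B(Q)$, and let $N=N_{\GL_{2n}(F)}(C)$. Any $g\in N$ normalizes $Z(C)=F[Q]\cong E$, so conjugation gives a homomorphism $\rho:N\to\operatorname{Aut}_F(E)=\Gal(E/F)$. Its kernel is $C_{\GL_{2n}(F)}(F[Q])=C^\times\cong\GL_2(E)$.
\begin{itemize}
\item \emph{$\rho$ is surjective.} Choose a generator $\sigma$ of the cyclic group $\Gal(E/F)$. By Skolem--Noether applied to $\sigma:F[Q]\to F[Q]$, there is $g\in\GL_{2n}(F)$ with $gag^{-1}=\sigma(a)$ for all $a\in F[Q]$. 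Such a $g$ also normalizes $C_B(F[Q])=C$, so $g\in N$ and $\rho(g)=\sigma$.
\item \emph{$\rho$ splits.} Fix an $E$-basis of $V$ and identify $V=E^2$. Let $s$ act on $V$ by applying $\sigma$ to each coordinate. This map is $F$-linear and bijective, so $s\in\GL_{2n}(F)$, and it satisfies $s\,a\,s^{-1}=\sigma(a)$ for $a\in E$. Moreover $s^n=1$, so $\langle s\rangle$ maps isomorphically onto $\Gal(E/F)$.
\end{itemize}
Hence $N\cong\GL_2(E)\rtimes\Gal(E/F)$, where the Galois group acts on $\GL_2(E)$ entrywise.

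I expect the main obstacle to be Step 2. There one must make sure that the abstract isomorphism $C\cong\Mat_2(E)$ can be made $F$-linear, which is needed to apply the double centralizer theorem and Skolem--Noether. One must also justify that $C$ equals the centralizer of its own center. The argument above handles both points by using uniqueness of finite subfields of a given order and the fact that centralizers are unital subalgebras.
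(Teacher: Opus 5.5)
Your proof is correct. Steps 1 and 3 are the paper's argument in coordinate-free form. The paper computes $C_B(Q)=\Mat_2(F[P])$ blockwise, and its splitting elements $U_\sigma=\operatorname{diag}(T_\sigma,T_\sigma)$ are exactly your map $s$ applying $\sigma$ coordinatewise. Your separate Skolem--Noether surjectivity bullet is therefore redundant, since $\rho(s)=\sigma$ already.

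The genuine difference is Step 2. The paper notes that $Z(A)\supseteq F$ has degree $n$ over $F$, concludes $A\cong\Mat_2(E)$ as $F$-algebras, and then applies Skolem--Noether once, to an $F$-isomorphism $C_B(Q)\to A$ of the whole simple subalgebras. You instead apply Skolem--Noether only to the centers $L=Z(C)$ and $F[Q]$, and then recover $C$ as $C_B(L)$ via the double centralizer theorem.

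The paper's route is shorter. Yours needs only the field case of Skolem--Noether. It also does not need the semilinearity adjustment you flagged as the main obstacle. The facts $\dim_F C=4n$ and $[L:F]=n$ already follow from $|C|=|\Mat_2(E)|$ and $|L|=|E|$, and any two degree-$n$ extensions of $F$ are $F$-isomorphic. An $F$-algebra isomorphism of the full copies of $\Mat_2(E)$ is only required on the paper's route.

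Finally, $C_B(L)=C$ does not need the double centralizer theorem at all. Since $x\in Z(C)=L$, you have $C_B(L)\subseteq C_B(x)=C\subseteq C_B(L)$. This is the same observation the paper uses to identify the kernel of $\pi$ in the normalizer computation.
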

\begin{proof}
The centralizer of the given companion matrix $P$ is precisely $F[P]$. Since its characteristic polynomial is irreducible, we get the $F$-algebra isomorphism $F[P]\cong E$. Now, a $2 \times 2$ block matrix $X = [X_{ij}]_{i,j=1,2} \in M_{2n}(F)$ commutes with $Q$ if and only if each block $X_{ij}$ commutes with $P$. Hence, the centralizer of $Q$ is $C_B(Q)=\Mat_2(F[P])\cong\Mat_2(E)$.

Now, let $A\subseteq B$ be another centralizer isomorphic to $\Mat_2(E)$. We wish to show it is conjugate to $C_B(Q)$. To see this, note that since $Z(A)$ contains $F$ and is a finite field of degree $n$ over $F$, we have $A\cong\Mat_2(E)\cong C_B(Q)$ as $F$-algebras. By the Skolem--Noether theorem, an $F$-isomorphism from $C_B(Q)$ to $A$ extends to an inner automorphism of $B$, so there exists $u\in\GL_{2n}(F)$ such that $uC_B(Q)u^{-1}=A$, which proves the second part of the lemma.

For the last part, let $N$ be the normalizer of $C_B(Q)$ in $\GL_{2n}(F)$. Note that every element of $N$ is preserving the center $Z(C_B(Q))= \{\operatorname{diag}(S,S):S\in F[P]\} \cong E$ by conjugation and fixing the scalar matrices $\lambda I_{2n}$ with $\lambda \in F$, so its action on $E \cong Z(C_B(Q))$ is clearly an $F$-automorphism of $E$. Hence, we get a group homomorphism $\pi:N\to\Gal(E/F)$. Its kernel consists exactly of the invertible matrices commuting with every element of $Z(C_B(Q))$. Now, every element of $C_B(Q)$ commutes with its center, while every element commuting with this center also commutes with $Q\in Z(C_B(Q))$, so $C_B(Z(C_B(Q)))=C_B(Q)$. Therefore, the kernel is isomorphic to $\GL_2(E)$.

We next show that every $\sigma\in\Gal(E/F)$ is induced by an element of $N$. To this end, let us first identify $E$ with $F[t]/(f(t))$, where $f$ is the characteristic polynomial of $P$. In the basis $1,t,\ldots,t^{n-1}$, multiplication by $t$ is represented by $P$, so the matrices in $F[P]$ represent multiplication by elements of $E$. Let $T_\sigma$ be the matrix of the $F$-linear map $\sigma$ in this basis. For $s\in E$, let $M_s\in F[P]$ represent multiplication by $s$. Since $\sigma(s\,\sigma^{-1}(v))=\sigma(s)v$ for every $v\in E$, we have $T_\sigma M_sT_\sigma^{-1}=M_{\sigma(s)}$. Thus conjugation by $U_\sigma=\operatorname{diag}(T_\sigma,T_\sigma)$ applies $\sigma$ to each entry of $\Mat_2(E)$ under our identification with $\Mat_2(F[P])$. Hence, $U_\sigma$ normalizes $C_B(Q)$ and acts as $\sigma$ on its center, so $\pi(U_\sigma)=\sigma$.

Finally, the matrices $U_\sigma$ form a subgroup on which the restriction of $\pi$ to this subgroup is an isomorphism onto $\Gal(E/F)$, so $N\cong\GL_2(E)\rtimes\Gal(E/F)$.

\end{proof}

\section{Proof of Theorem \ref{thm:main}}
\begin{proof}
The base case $n=1$ follows from \cite{M}, so we assume $n>1$. Now, let $|F|=q=p^\ell$, $B=\Mat_{2n}(F)$, and  $\phi:\Gamma(B)\to\Gamma(R)$ be a graph isomorphism. By \cite{A}, together with equality of the numbers of vertices, the orders of the ring and its center are
\begin{equation}\label{eq:orders}
|R|=q^{4n^2},\qquad |Z(R)|=q.
\end{equation}

We choose $Q\in B$ as in Lemma \ref{lem:matrix}, so that $C_B(Q)\cong\Mat_2(E)$ with $[E:F]=n$. By Lemma \ref{lem:matrix}, we thus have
\begin{equation}
\Gamma(C_R(\phi(Q)))\cong\Gamma(\Mat_2(E)).
\end{equation}
By \cite{M}, we get $C_R(\phi(Q))\cong\Mat_2(E)$ and note that its center is a field containing $Z(R)$, so $Z(R)$ is a finite domain and hence a field too. By \eqref{eq:orders}, we identify $Z(R)$ with $F$ and consider $R$ as a central $F$-algebra of dimension $4n^2$.

Let $J=J(R)$ be the Jacobson radical of $R$ and let
\begin{equation}
\mathfrak C=\{C_R(x):x\notin Z(R),\ C_R(x)\cong\Mat_2(E)\}.
\end{equation}
We first want to show that $1+J$ acts freely on $\mathfrak C$ by conjugation. To this end, take  $C=C_R(x)\in\mathfrak C$ and since $C$ is simple and $1\notin J$, the ideal $C\cap J$ of $C$ must be the zero ideal. If $u\in 1+J$ normalizes $C$, then $ucu^{-1}-c\in C\cap J =0$ for every $c\in C$ and thus $ucu^{-1}=c$. Thus, $u$ commutes with $c$ and in particular, it commutes with $x\in C$, so $u\in C$. Moreover, $u-1 \in C \cap J=0$, hence, $u=1$. Therefore, the action is free, which, as a consequence,
\begin{equation}\label{eq:divisibility}
|J|=|1+J|\mid |\mathfrak C|.
\end{equation}

By Lemma \ref{lem:matrix} and \cite{M}, $\phi$ gives a bijection between $\mathfrak C$ and the centralizers in $B$ isomorphic to $\Mat_2(E)$. Lemma~\ref{lem:matrix} shows that these form one conjugacy orbit, with normalizer of order $n|\GL_2(q^n)|$. Consequently,
\begin{equation}
|\mathfrak C| =\frac{|\GL_{2n}(q)|}{n|\GL_2(q^n)|} =\frac{q^{n(2n-1)}\prod_{i=1}^{2n}(q^i-1)} {nq^n(q^n-1)(q^{2n}-1)} =\frac{q^{2n(n-1)}}{n} \prod_{\substack{1\leq i\leq 2n\\ i\neq n,\,2n}}(q^i-1).
\end{equation}
Since $q=p^\ell$, then none of the factors $q^i-1$ is divisible by the prime $p$. Therefore, $v_p(|\mathfrak C|)=2\ell n(n-1)-v_p(n)$, where $v_p(k)$ denotes the exponent of $p$ in $k$. Now, since $|J|=q^{\dim_F J}$ and
$|J|\mid|\mathfrak C|$ by \eqref{eq:divisibility}, so we have the following upper bound
\begin{equation}
\ell\dim_F J \leq 2\ell n(n-1)-v_p(n) \leq 2\ell n(n-1),
\end{equation}
hence,
\begin{equation}\label{eq:bound}
\dim_F J\leq 2n(n-1)<2n^2.
\end{equation}

Now, consider $S=R/J$ and take the same $C=C_R(x)\in\mathfrak C$ as before. We claim that the image of $C$ in $S$ is precisely $C_S(\bar x)$, where $\bar x=x+J$. Indeed every element of $C$ commutes with $x$, thus its image belongs to $C_S(\bar x)$. For the reverse inclusion, take the following $F$-linear map
\begin{equation}
J\longrightarrow J,\qquad j\longmapsto jx-xj.
\end{equation}
Its kernel is $C\cap J=0$, so it is bijective. If $b+J\in C_S(\bar x)$, then $bx-xb\in J$. There is therefore $j\in J$ such that $jx-xj=bx-xb$, which means that $b-j\in C$. Since $(b-j)+J=b+J$, this proves the claim which, together with $C\cap J=0$, gives
$C_S(\bar x)\cong C\cong\Mat_2(E)$.

Next, we wish to show that the semisimple ring $S$ is in fact simple. If not, a decomposition of $S$ into at least two simple factors would give a decomposition of $C_S(\bar x)$ into at least two nonzero factors, since the centralizer in each factor contains its identity. This contradicts the simplicity of $C_S(\bar x) \cong \Mat_2(E)$, which we established earlier.

We turn to proving $J=0$. Since $F\cap J=0$, we identify $F$ with its image in $S$ and let $d=[Z(S):F]$. Now, $Z(S)$ is contained in $Z(C_S(\bar x))\cong E$, so $d\mid n$. Note that since $S$ is a central simple algebra over $Z(S)$, we
can then apply the double centralizer theorem to get
\begin{equation}
\label{eq:S_Z(S)}
\dim_{Z(S)}S =\dim_{Z(S)}C_S(\bar x)\, \dim_{Z(S)}Z(C_S(\bar x)) =\frac{4n}{d}\frac{n}{d}.
\end{equation}
From \ref{eq:S_Z(S)}, we deduce that $\dim_F S=4n^2/d$, and therefore
\begin{equation}
\dim_F J=\dim_F R-\dim_F S =4n^2\left(1-\frac{1}{d}\right).
\end{equation}
Note that if $d\geq2$, then we get $\dim_F J\geq2n^2$, which contradicts \eqref{eq:bound}, so we must have $d=1$ and consequently $J=0$.

Finally, we conclude that $R$ is a finite simple ring (since $J=0$ and we have shown that $S=R/J$ is simple) with center $F$, so the Artin--Wedderburn theorem and Wedderburn's little theorem give $R\cong\Mat_t(F)$ for some positive integer $t$. Since, $\dim_F R=4n^2$, this implies $t=2n$, which completes the proof.

\end{proof}

\begin{remark}
We proved Conjecture \ref{conj} for matrices of even size using the known case of size $2$. Indeed, our method extends to every multiple of a prime once the conjecture is known for that prime size over every finite field. In particular, if the conjecture is proved for every prime size over every finite field, we can surely deduce it for every size. 
\end{remark}

\section*{Declaration of generative AI use}
The authors used ChatGPT (OpenAI) for grammar checking, language editing, and assistance with proof checking. The main ideas and proofs are the authors' own.

\end{document}